\tikzstyle{block}=[draw opacity=0.7,line width=1.4cm]
\newcommand{\oprocendsymbol}{\hbox{$\bullet$}}
\newcommand{\oprocend}{\relax\ifmmode\else\unskip\hfill\fi\oprocendsymbol}
\newcommand{\VV}{\mathcal{V}}
\newcommand{\EE}{\mathcal{E}}
\newcommand{\GG}{\mathcal{G}}
\newcommand{\real}{{\mathbb{R}}}
\newcommand{\reals}{{\mathbb{R}}}
\newcommand{\realpositive}{{\mathbb{R}}_{>0}}
\newcommand{\realnonnegative}{{\mathbb{R}}_{\ge 0}}
\newcommand{\until}[1]{\in\{1,\dots,#1\}}
\newcommand{\solmaz}[1]{{\color{red}#1}}
\newcommand{\amir}[1]{{\color{blue}#1}}
\newcommand{\vect}[1]{\boldsymbol{\mathbf{#1}}}
\newcommand{\vectsf}[1]{\boldsymbol{\mathbf{\mathsf{#1}}}}
\newcommand{\re}[1]{\operatorname{Re}(#1)}
\newcommand{\Diag}[1]{\operatorname{Diag}(#1)}
 \newcommand{\boxend}{\hfill \ensuremath{\Box}}
\newcommand{\margin}[1]{\marginpar{\color{red}\tiny\ttfamily#1}}
\newtheorem{thm}{Theorem}[section]
\newtheorem{lem}{Lemma}[section]
\title{\LARGE \bf The fastest linearly converging discrete-time average consensus using buffered information}
\author{Amir-Salar Esteki, Hossein Moradian and Solmaz S. Kia, \emph{Senior Member, IEEE} 
  \thanks{The authors are with the Department of Mechanical and
    Aerospace Engineering, University of California Irvine, Irvine, CA 92697, USA {\tt\small
      \{aesteki, hmoradia, solmaz\}@uci.edu}. This work was supported by NSF, under CAREER Award ECCS-1653838.}
}
\begin{document}

\maketitle

  \begin{abstract}
  In this letter, we study the problem of accelerating reaching average consensus over connected graphs in a discrete-time communication setting. Literature has shown that consensus algorithms can be accelerated by increasing the graph connectivity or optimizing the weights agents place on the information received from their neighbors. In this letter instead of altering the communication graph, we investigate two methods that use buffered states to accelerate reaching average consensus over a given graph. In the first method, we study how convergence rate of the well-known first-order Laplacian average consensus algorithm changes with delayed feedback and obtain a sufficient condition on the ranges of delay that leads to faster convergence.  In the second proposed method, we show how average  consensus problem can be cast as a convex optimization problem and solved by first-order accelerated optimization algorithms for strongly-convex cost functions. We construct the fastest converging average consensus algorithm using the so-called Triple Momentum optimization algorithm. We demonstrate our results using an in-network linear regression problem, which is formulated as two average consensus problems.
  \end{abstract}

\textbf{keywords}: Consensus Algorithm, Accelerated Average Consensus, Delay Systems, Multi-agent systems

\section{Introduction}
In the average consensus problem the objective is to enable a group of communicating agents $\mathcal{V}=\{1,\cdots,N\}$ to arrive at the average of their local input $\mathsf{r}^i\in\real$, i.e., to obtain $\mathsf{r}^{\text{avg}}=\frac{1}{N}\sum_{j=1}^N\mathsf{r}^j$ using local interactions. The solution to this problem is of great importance in various multi-agent applications such as robot coordination~\cite{PY-RAF-KML:08}, sensor fusion~\cite{ROS-JSS:05,ROS:07,WR-UMA:17}, distributed estimation~\cite{SM:07} and  formation control~\cite{JAF-RMM:04}. In these applications, reaching fast to the average consensus is of great interest to reduce the end-to-end delays and also the convergence error caused by premature termination of the algorithm because of time constraints. 

The well-known solution for the average consensus problem is the first-order iterative algorithm 
\begin{subequations}\label{eq::consensus-orig}
\begin{align}
   & {x}^i(t_{k+1})=x^i(t_{k})-\delta\,\sum\nolimits_{j=1}^N\!\!a_{ij}(x^j(t_k)-
    x^i(t_k)),\\
    &~x^i(t_0)=\mathsf{r}^i,\quad i\in\VV.
\end{align}
\end{subequations}
where $a_{ij}$s are the adjacency weights. In this algorithm, each agent $i$ uses the agreement feedback $\sum\nolimits_{j=1}^N\!a_{ij}(x^j(t)-x^i(t))$ to derive its local agreement state $x^i$ towards $\mathsf{r}^{\text{avg}}$. When the interaction topology of the agents is a connected undirected graph, see Fig.~\ref{fig::graph},~\cite{ROS-JAF-RMM:07} shows that with a proper choice of stepsize $\delta=t_{k+1}-t_k$, executing~\eqref{eq::consensus-orig} guarantees $x^i\!\to\!\mathsf{r}^\text{avg}$, $i\in\VV$, as $k\!\to\!\infty$. Our objective in this paper is to obtain accelerated average consensus algorithms that have a provably faster convergence rate than algorithm~\eqref{eq::consensus-orig} when the same stepsize $\delta$ is used. We consider two approaches: one using outdated agreement feedback in~\eqref{eq::consensus-orig} and the other by constructing alternative algorithms using the first-order accelerated optimization algorithms for strongly convex unconstrained optimization problems.
 
\begin{figure}[t]
\centering
\begin{tikzpicture}[scale=0.9]
\tikzset{
    nor/.style={circle,minimum size=.5cm,fill=black!20,draw},
    n1/.style={very thick,blue!65!black}}
\node[nor] (1) at (0,0) {1};
\node[nor] (2) at (2,0) {2};
\node[nor] (3) at (2,-2) {3};
\node[nor] (4) at (-2,-2) {4};
\node[nor] (5) at (0,-2) {5};
\draw[n1][-] (1) -- (2);
\draw[n1][-] (1) -- (5);
\draw[n1][-] (2) -- (3);
\draw[n1][-] (3) -- (5);
\draw[n1][-] (4) -- (5);
\draw[n1][-] (1) -- (4);
\draw[n1][-] (2) -- (5);
\end{tikzpicture}
\caption{{\small A connected undirected graph $\mathcal{G}(\VV,\mathcal{E},\vectsf{A})$ with five agents. The adjacency weights of the agents here are $a_{ij}=a_{ji}=1$ if $(i,j)$ is in the edge set $\mathcal{E}$, and $a_{ij}=a_{ji}=0$  otherwise.}}\label{fig::graph}
\end{figure}
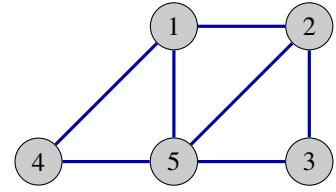

For a multi-agent system with connected  undirected communication graph, the convergence rate of the average consensus algorithm~\eqref{eq::consensus-orig} is tied to the connectivity of the graph~\cite{MF:73} and the spectral radius of matrix $(\vect{I}-\delta \vect{L})$ where $\vect{L}$ is the Laplacian matrix of the graph~\cite{ROS-RMM:04,SSK-BVS-JC-RAF-KML-SM:19}.
Given this connection, 
various studies such as optimal adjacency weight selection for a given topology by maximizing the smallest non-zero eigenvalue of the Laplacian matrix~\cite{LX-SB:04,SB-AG-BP-DS:06} or rewiring the graph to create topologies such as small-world network~\cite{SK-JMF:06,PH-JSB-VG:08} with high connectivity are proposed in the literature. In this letter, instead of altering the communication graph, we investigate two methods that use buffered states to accelerate reaching average consensus over a given graph. 

Our first method is motivated by evidence in the literature on the positive effect of time delay on increasing stability margin and rate of convergence of continuous-time linear time-delayed systems and use of delayed agreement feedback to accelerate continuous-time Laplacian average consensus algorithm, see~\cite{HM-SSK:22tcns,HM-SSK:20tac,YG-MS-CS-NM:16,ZM-YC-WR:10,YC-WR:10, WQ-RS:13}. Since the results obtained for the continuous-time Laplacian algorithm cannot be trivially extended to discrete-time communication setting, we investigate using out-dated feedback in~\eqref{eq::consensus-orig} to increase the convergence rate. More precisely, we explore for what values of non-zero $d$ in  
\begin{align}\label{eq::consensus-orig_dated}
   & {x}^i(k+1)=x^i(k)-\delta\,\sum\nolimits_{j=1}^N\!a_{ij}(x^j(t-d)-
    x^i(t-d)),\nonumber\\
   &\qquad\vect{x}(k)=0~\text{for}~ k\in\{-d ,\cdots,-1\},~x^i(0)=\mathsf{r}^i,
\end{align}
$i\in\VV$, we can archive faster convergence than~\eqref{eq::consensus-orig}. Our contribution is to characterize fully the range of delay $d$ for which convergence is accelerated.
 
Even though our results show that for appropriate values of delay $d\in\mathbb{Z}_{>0}$ in~\eqref{eq::consensus-orig_dated}, convergence can be accelerated, this method has its own limitations because of restricting the structure of the algorithm to the first-order form of algorithm~\eqref{eq::consensus-orig}. This leaves one to wonder whether faster convergence can be achieved by using alternative forms. With such motivation, for example, \cite{TCA-BNO-MJC:08} proposes to improve the rate of convergence by predicting future state values using a weighted summation of current and previous states, denoted as the mixing parameter. However, it requires a complex parameter design procedure since significant improvements in the rate of convergence are usually achieved by values outside the identified range of the mixing parameter which also requires agents to know extra global information~\cite[Equation (13)]{TCA-BNO-MJC:08}. A simple and more effective approach, however, is reported in~\cite{bu2018accelerated}, which casts average consensus problem as a convex optimization problem and uses the  accelerated  Nesterov's optimization method to design a fast-converging average consensus algorithm. Nesterov algorithms~\cite{nesterov2013introductory}, one for convex  (denoted here as NAG-C) and one for strongly-convex (denoted here as NAG-SC) cost functions, are  gradient-based optimization methods that use the buffered one-step past gradient value to accelerate convergence.  By casting the consensus algorithm as an optimization problem with the cost $\frac{1}{2}\vect{x}^\top\vect{L}\vect{x}$ where $\vect{x}$ is the aggregated agreement state of the agents,~\cite{bu2018accelerated} invokes NAG-C method to design its accelerated algorithm. The choice of NAG-C is because $\vect{L}$ of connected graphs is positive semi-definite and thus $\frac{1}{2}\vect{x}^\top\vect{L}\vect{x}$ is a convex function. In this letter, we show that with an alternative modeling approach, we can in fact use the NAG-SC to arrive at a faster converging average consensus algorithm. Our approach also opens the door for use of the so-called Triple Momentum (hereafter denoted as TM) algorithm which is the fastest known globally convergent
gradient-based method for minimizing
strongly convex functions~\cite{BVS-RAF-KML:18}. TM also uses the buffered one-step past gradient value, but has a provably faster convergence than the Nesterov algorithms.

\emph{Organization}: 
Notations and preliminaries including a brief review of the relevant properties of the time-delayed discrete-time systems and the graph theoretic definitions are given in Section~\ref{sec::prelim}.  Problem definitions and the objective statements are given in Section~\ref{sec::Prob_formu}, while the main results are given in~Sections~\ref{sec::main} and \ref{sec::acc}. Numerical simulations to illustrate our results are given in Section~\ref{sec::Num_ex}.   Section~\ref{sec::Con} summarizes our concluding remarks.
\section{Notations and Preliminaries}\label{sec::prelim}
We let $\reals$, $\realpositive$, $\realnonnegative$, $\mathbb{Z}$, and $\mathbb{C}$
denote the set of real, positive real, non-negative real, integer, and complex numbers, respectively. 
The transpose of a matrix $\vect{A}\in\real^{n\times m}$ is~$\vect{A}^\top$.
The set of eigenvalues of matrix $\vect{A}\in\real^{n\times n}$ is $\sigma(\vect{A})$ and its spectral radius is $\rho(\vect{A})$. Recall that for a square matrix $\vect{A}$, we have~\cite{RAH-CRJ:90}
\begin{align}\label{eq::spectral_formula}
   \lim_{k\to\infty}\|\vect{A}^k\|^{1/k}=\rho(\vect{A}),
\end{align}
and when $\vect{A}$ is symmetric, we have $\|\vect{A}\|=\rho(\vect{A})$.
We follow~\cite{FB-JC-SM:09} to define our graph related terminologies and notations. In a network of $N$ agents  with undirected connected graph
topology the graph is denoted by $\mathcal{G}(\mathcal{V},\mathcal{E},\vectsf{A})$ where $\mathcal{V}=\{1,\cdots,N\}$ is the node set, $\mathcal{E}\subset\mathcal{V}\times \mathcal{V}$ is the edge set and $\vectsf{A}=[a_{ij}]$ is the adjacency matrix of the graph. Recall that $a_{ii}=0$, $a_{ij}\in\real_{>0}$ if $j\in\VV$ can send information to agent $i\in\VV$, and zero otherwise. In an undirected graph the connection between the nodes is bidirectional and  $a_{ij}=a_{ji}$ if $(i,j)\in\mathcal{E}$. Finally, an undirected graph is connected if there is a path from every agent to every other agent in the network (see e.g.~Fig.~\ref{fig::graph}). The Laplacian matrix of the graph is  $\vect{L}=\text{Diag}(\vectsf{A}\vect{1}_N)-\vectsf{A}$. The Laplacian matrix of a connected undirected graph is a symmetric positive semi-definite matrix that has a simple $\lambda_1=0$ eigenvalue, and the rest of its eigenvalues satisfy $\lambda_1=0< \lambda_2\leq\cdots\leq\lambda_N$.  Moreover, $\vect{L}\vect{1}_N=\vect{0}$. 


Consider the time-delayed discrete-time system  
\begin{align}\label{eq::delay_difference}
\vect{x}(k+1)&=\vect{x}(k)+\delta\vect{A}x(k-d)\nonumber\\
\vect{x}(k)&=\vect{\psi}(k),\quad k\in\{-d ,\cdots,0\},
\end{align}
where $\vect{x}(k)\in\real^n$ is the state variable at time step $k$, $\vect{A}\in\real^{n\times n}$ is the system matrix, $\vect{\psi}(k)$ is a $\mathbb{Z}\rightarrow\reals$ function, and $d\in\mathbb{Z}_{>0}$ denotes delay. The asymptotic stability of system~\eqref{eq::delay_difference} can be assessed by the roots of its characteristic equation $\mathcal{T}:\mathbb{C}\to\mathbb{C}$ described by
\begin{align}\label{eq::char_dis}
\mathcal{T}(s)=\text{det}\left((s^{d+1}-s^d)\vect{I}-\delta\vect{A}\right).   
\end{align}
The next theorem provides the stability condition of~\eqref{eq::delay_difference}.
\begin{thm}[Stability condition for system~\eqref{eq::delay_difference}~\cite{SN:01}]\label{thm::rate_discrete} \rm For any $d\in\real_{\geq0}$, let $\{s_i\}_{i=1}^{(d+1)n}$ be the roots of the characteristic equation~\eqref{eq::char_dis}. 
Then, system~\eqref{eq::delay_difference} is asymptotically stable if and only if  $|s_i|<1, i\in\{1,\cdots,(d+1)n\}$, i.e., all the roots are located strictly inside the unit circle in the complex~plane.\boxend
\end{thm}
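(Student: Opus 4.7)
The plan is to reduce the stability question for the delay-difference equation~\eqref{eq::delay_difference} to a standard spectral condition on a finite-dimensional linear time-invariant system obtained by state augmentation, and then identify the spectrum of that augmented system with the roots of $\mathcal{T}(s)$. This is a routine lifting argument for scalar delays in discrete time, but one must be careful about matching dimensions and about how $s=0$ is handled in the characteristic polynomial.

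First, I would introduce the augmented state $\vect{z}(k)=[\vect{x}(k)^\top,\vect{x}(k-1)^\top,\ldots,\vect{x}(k-d)^\top]^\top\in\real^{(d+1)n}$ and rewrite~\eqref{eq::delay_difference} as an ordinary LTI recursion $\vect{z}(k+1)=\vect{M}\vect{z}(k)$, where $\vect{M}\in\real^{(d+1)n\times (d+1)n}$ is the block companion matrix
\[
\vect{M}=\begin{pmatrix} \vect{I} & \vect{0} & \cdots & \vect{0} & \delta\vect{A}\\ \vect{I} & \vect{0} & \cdots & \vect{0} & \vect{0}\\ \vect{0} & \vect{I} & \cdots & \vect{0} & \vect{0}\\ \vdots & & \ddots & & \vdots\\ \vect{0} & \vect{0} & \cdots & \vect{I} & \vect{0}\end{pmatrix}.
\]
Since the initial history $\vect{\psi}$ determines $\vect{z}(0)\in\real^{(d+1)n}$ uniquely and the tail of $\vect{z}(k)$ consists of past samples of $\vect{x}$, asymptotic stability of~\eqref{eq::delay_difference} (i.e., $\vect{x}(k)\to 0$ for every history) is equivalent to $\vect{z}(k)\to 0$ for every $\vect{z}(0)$, which by the standard discrete-time LTI criterion is equivalent to $\rho(\vect{M})<1$.

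Next I would identify $\spec(\vect{M})$ with the root set of $\mathcal{T}(s)$. The cleanest route is the exponential-mode ansatz $\vect{x}(k)=s^k\vect{v}$ with $\vect{v}\neq\vect{0}$: substituting into~\eqref{eq::delay_difference} and eliminating the common factor $s^{k-d}$ yields $(s^{d+1}-s^d)\vect{v}=\delta\vect{A}\vect{v}$, so nontrivial modes exist precisely for the $s$ with $\mathcal{T}(s)=0$. To turn this into an eigenvalue statement for $\vect{M}$, I would compute $\det(s\vect{I}_{(d+1)n}-\vect{M})$ by block cofactor expansion along the last block column: the lower-triangular block structure contributes $s^{dn}$ and the top-right entry $\delta\vect{A}$ produces an $-s^{-d}\delta\vect{A}$-type correction, so that after clearing powers of $s$ one obtains $\det(s\vect{I}-\vect{M})=\det((s^{d+1}-s^d)\vect{I}-\delta\vect{A})=\mathcal{T}(s)$, a polynomial of degree $(d+1)n$ with exactly $(d+1)n$ roots counted with multiplicity.

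With this identification, asymptotic stability $\Leftrightarrow$ $\rho(\vect{M})<1$ $\Leftrightarrow$ $|s_i|<1$ for every root of $\mathcal{T}$, which is the claim. The main obstacle in this plan is the block determinant step: a priori the cofactor expansion produces a rational function in $s$, and one must verify that after multiplication by the appropriate power of $s$ the resulting polynomial is exactly $\mathcal{T}(s)$ as defined, including the correct handling of any root at $s=0$ (which would correspond to modes that die out in finitely many steps and are therefore harmless for stability). A secondary but standard issue is to argue that repeated roots of $\mathcal{T}$ only generate polynomial-in-$k$ corrections to $s^k$, so the strict inequality $|s_i|<1$ at every root is indeed necessary as well as sufficient; this follows from the Jordan form of $\vect{M}$ once the spectral identification is in hand.
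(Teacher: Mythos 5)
The paper does not prove this theorem at all --- it is stated as a known result and attributed to~\cite{SN:01} --- so there is no ``paper proof'' to compare against; your job here is to supply the standard argument, and you do so correctly. The lifting to $\vect{z}(k)=[\vect{x}(k)^\top,\ldots,\vect{x}(k-d)^\top]^\top$ with the block companion matrix $\vect{M}$ is right, the equivalence ``asymptotic stability for every history $\vect{\psi}$ $\Leftrightarrow$ $\rho(\vect{M})<1$'' is the textbook discrete-time LTI criterion, and the identification $\det(s\vect{I}-\vect{M})=\det\bigl((s^{d+1}-s^d)\vect{I}-\delta\vect{A}\bigr)$ is valid. Two small remarks to tighten the write-up. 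First, the cleanest justification of the determinant identity is not cofactor expansion with a rational correction but the commuting-blocks theorem: every block of $s\vect{I}-\vect{M}$ is a polynomial in $\vect{A}$ (namely $(s-1)\vect{I}$, $s\vect{I}$, $-\vect{I}$, $-\delta\vect{A}$, or $\vect{0}$), so the block determinant equals the determinant of the formal companion determinant $(s^{d+1}-s^d)\vect{I}-\delta\vect{A}$; alternatively, your expansion gives the identity for $s\neq 0$ and both sides are polynomials, so it holds everywhere, which also disposes of the $s=0$ case without special pleading. Second, the modal ansatz $\vect{x}(k)=s^k\vect{v}$ is good intuition but is superseded by the determinant computation (it degenerates at $s=0$ and does not by itself account for multiplicities), so you can safely drop it or present it only as motivation. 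Finally, read the theorem's ``$d\in\real_{\geq 0}$'' as the obvious typo for $d\in\mathbb{Z}_{\geq 0}$, as your construction implicitly does.
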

For $d=0$, \cite[Lemma~S1]{SSK-BVS-JC-RAF-KML-SM:19} shows that asymptotic stability of~\eqref{eq::delay_difference} is guaranteed for any 
\begin{equation}\label{eq::delta_rang}
\delta\in\left(0,\min\left\{-2\frac{\re{\mu_i}}{|\mu_i|^2}\right\}_{i=1}^n\right),
\end{equation} 
where $\{\mu_i\}_{i=1}^n$ are eigenvalues of $\vect{A}$. In order to analyze the convergence rate of~\eqref{eq::delay_difference} for any $d\in\mathbb{Z}_{\geq0}$, we define the asymptotic  convergence factor of  system~\eqref{eq::delay_difference} as 
\begin{align}\label{eq::asym_rate}
\mathsf{r}_d=\underset{\vect{x}(0)\neq\vect{0}}{\sup}\lim_{k\to\infty}\left(\frac{\|\vect{x}(k)\|}{\|\vect{x}(0)\|}\right)^{\frac{1}{k}}.
\end{align}
The associated convergence time of system~\eqref{eq::delay_difference} is then 
\begin{align}
\mathsf{t}_{d}=\frac{1}{\textup{log}(1/\mathsf{r}_d)},
\end{align}
which represents the (asymptotic) number of steps in which the norm of trajectories of~\eqref{eq::delay_difference} reduces by the factor $1/\text{e}$. For the special case,  $d=0$, since $\vect{x}(k)=(\vect{I}+\delta\vect{A})^k\vect{x}(0)$ then it follows from~\eqref{eq::spectral_formula} that
\begin{align}\label{eq::r0}
    \mathsf{r}_0=\rho(\vect{I}+\delta\vect{A}).
\end{align}


\section{Problem Definition}\label{sec::Prob_formu}
This paper considers the static average consensus problem over a connected undirected graph $G(\VV,\EE,\vectsf{A})$. Algorithm~\eqref{eq::consensus-orig} is the well-known solution for the average consensus problem. According to~\eqref{eq::delta_rang}, the admissible stepsize for algorithm~\eqref{eq::consensus-orig} over a connected graph should satisfy $\delta\in(0,\frac{2}{\lambda_N})$ so that 
algorithm~\eqref{eq::consensus-orig} 
converges exponentially fast to the average of the initial conditions of the agents~\cite{ROS-RMM:04}. 
The convergence factor of average consensus according to~\eqref{eq::consensus-orig_dated} is 
$\mathsf{r}_0=\max\{|1-\delta\lambda_2|,|1-\delta\lambda_N|\}$. For  $\delta\in(0,\frac{1}{\lambda_N}]$, given that $0<\lambda_2\leq\lambda_N$, $\mathsf{r}_0=|1-\delta\lambda_2|$. 
Given $\delta\in(0,\frac{1}{\lambda_N})$, if one wants to increase the rate of convergence of algorithm~\eqref{eq::consensus-orig} then the only possible mechanism is to decrease $\delta$, or in another word, increase the frequency of the communication between the agents. The objective in this paper is to investigate algorithms that can have provably faster convergence but using the same stepsize $\delta$. Our first approach is to investigate using out-dated feedback in~\eqref{eq::consensus-orig}, i.e., using non-zero $d$ in~\eqref{eq::consensus-orig_dated}.
Our second approach is to cast the average consensus problem as a convex optimization problem and then seek faster converging algorithms using the first-order accelerated optimization~algorithms. 

\section{Accelerated average algorithm via outdated agreement feedback}\label{sec::main}
In this section, we study convergence of algorithm~\eqref{eq::consensus-orig_dated} and determine for what values of $d\in\mathbb{Z}_{>0}$, this algorithm can have faster convergence than algorithm~\eqref{eq::consensus-orig}. For convenience in our study, we implement the change of variable  $\vect{z}(t)=\vect{T}^\top(\vect{x}(t)-\mathsf{x}^\text{avg}(0)\vect{1}_N)$ to write~\eqref{eq::consensus-orig} in the following equivalent form
\begin{subequations}\label{eq::laclacian_equivalent}
\begin{align}
    {z}_1(k+1)&=z_1(k),\quad\quad\quad {z}_1(0)=\frac{1}{\sqrt{N}}\sum\nolimits_{i=1}^N\mathsf{r}^i,\label{eq::laclacian_equivalent_z1}\\
z_i(k+1)&\!=z_i(k)\!-\delta\lambda_i z_i(k-d),\label{eq::laclacian_equivalent_z2} \\
z_{i}(0)&=[\vect{T}^\top\vectsf{x}(0)]_i, \quad z_{i}(k)=0~~~ k\in\{-d,\cdots,-1\}, \nonumber
\end{align}
\end{subequations}
for $i\in\{2,\cdots,N\}$, where
\begin{align}\label{eq::T}\vect{T}=\begin{bmatrix}\frac{1}{\sqrt{N}}\vect{1}_N&\vect{R}\end{bmatrix},\quad \vect{R}=\begin{bmatrix}v_2&\cdots&v_N\end{bmatrix}
\end{align}
satisfies $\vect{T}^\top\vect{L}\vect{T}=\vect{\Lambda}=\Diag{0,\lambda_2,\cdots,\lambda_N}$. Such $\vect{T}$ always exists, since $\vect{L}$ of a connected undirected graph is a symmetric and real matrix whose normalized eigenvectors $v_1=\frac{1}{\sqrt{N}}\vect{1}_N,v_2,\cdots,v_N$ are mutually orthogonal.
 Evidently, here we have 
\begin{align}\label{eq::x_convergence}\lim_{t\to\infty}\vect{x}(t)=&\frac{1}{\sqrt{N}}\lim_{t\to\infty}z_1(t)\vect{1}_N+\lim_{t\to\infty}\vect{R}\,\vect{z}_{2:N}(t)\nonumber\\=&(\frac{1}{N}\sum\nolimits_{i=1}^N\mathsf{r}^i)\vect{1}_N+\vect{R}\lim_{t\to\infty}\vect{z}_{2:N}(t).
\end{align}
where $\vect{z}_{2:N}=(z_2,\cdots,z_N)^\top$.
Therefore, the correctness and the convergence factor of the average consensus algorithm~\eqref{eq::consensus-orig} are determined, respectively, by asymptotic stability and the worst convergence factor of the scalar dynamics in~\eqref{eq::laclacian_equivalent_z2}.

The characteristic equation of the scalar dynamics in~\eqref{eq::laclacian_equivalent_z2} is given  by
\begin{align}\label{eq::char-lambda-i}
    \mathcal{T}(s)=s^{d+1}-s^d+\delta \lambda_i,\quad i\in\{2,\cdots,N\}.
\end{align}
The roots of~\eqref{eq::char-lambda-i} are all simple except when $\lambda_i=\frac{d^d}{\delta(d+1)^{d+1}}$ \cite{SAK:94}.
It is shown in~\cite{ISL:05} that the the roots of the characteristic equation~\eqref{eq::char-lambda-i} lie inside the unit circle if and only if $\delta\lambda_i$ lies inside the region of complex plane enclosed by the curve
\begin{align*}
\left\{z\in \mathbb{C}|z=2\vect{i}\sin(\frac{\phi}{2d+1})e^{\vect{i}\phi},-\frac{\pi}{2}\leq\phi\leq\frac{\pi}{2}\right\}.
\end{align*} 
Based on this observation and considering the asymptotic stability condition of Theorem~\ref{thm::rate_discrete},~\cite{HM-SSK:18} derives the admissible range of delay for the average consensus algorithm~\eqref{eq::consensus-orig} as follows.

\begin{lem}[Admissible range of $d$ for~\eqref{eq::consensus-orig} over connected undirected graphs~\cite{HM-SSK:18}]\label{lem::admissible-d}{\rm
Let $\GG$ be a connected undirected graph. Assume that $\delta\in(0,\frac{2}{\lambda_N})$. Then, for any $d\in\{1,\cdots,\bar{d}\}$, the average consensus algorithm~\eqref{eq::consensus-orig} satisfies $\lim_{t\to\infty}x^i=\mathsf{x}^{\text{avg}}(0)$, $i\until{N}$, (the algorithm converges asymptotically) if and only if 
\begin{align}\label{eq::stability con_dis}
\bar{d}\!=\!\min\Big\{d\in\mathbb{Z}_{\geq 0}\big|\,d>\hat{d},~~\hat{d}=&\frac{1}{2}\big(\frac{\pi}{2\arcsin(\frac{\delta\,\lambda_i}{2})}-1\big),~\nonumber\\
&\,\,i\in\{2,\cdots,N\}\Big\},
\end{align}
where $\{\lambda_i\}_{i=2}^N$ are the non-zero eigenvalues of $\vect{L}$. }\boxend
\end{lem}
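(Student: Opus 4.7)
The plan is to reduce the asymptotic stability of the delayed iteration~\eqref{eq::consensus-orig_dated} to the stability of $N-1$ decoupled scalar delay difference equations through the orthogonal modal change of variables already set up in~\eqref{eq::laclacian_equivalent}, and then to read off the critical integer delay from the known shape of the stability region for those scalar equations. First, I would apply the change of variables $\vect{z}(k)=\vect{T}^\top(\vect{x}(k)-\mathsf{x}^{\text{avg}}(0)\vect{1}_N)$ with $\vect{T}$ as in~\eqref{eq::T}; the mode $z_1$ is invariant, and by~\eqref{eq::x_convergence} the limit $\lim_{k\to\infty}\vect{x}(k)=\mathsf{x}^{\text{avg}}(0)\vect{1}_N$ holds if and only if $z_i(k)\to 0$ for every $i\in\{2,\ldots,N\}$. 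Consequently, convergence of the delayed consensus iteration is equivalent to asymptotic stability of every scalar subsystem $z_i(k+1)=z_i(k)-\delta\lambda_i\, z_i(k-d)$, whose characteristic polynomial is~\eqref{eq::char-lambda-i}.

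Next I would invoke Theorem~\ref{thm::rate_discrete} together with the geometric criterion from~\cite{ISL:05} quoted in the excerpt: the roots of $s^{d+1}-s^d+\delta\lambda_i=0$ lie strictly inside the unit disk if and only if $\delta\lambda_i$ lies in the open region enclosed by the stated boundary curve. Since $\delta\in(0,2/\lambda_N)$ and $\lambda_i>0$ for $i\ge 2$, the quantity $\delta\lambda_i$ sits strictly on the positive real axis, so the problem reduces to locating where that boundary meets $\reals_{>0}$. Forcing the parametrization $2\vect{i}\sin(\phi/(2d+1))e^{\vect{i}\phi}$ to be real and positive pins down the critical angle $\phi$, and a direct substitution yields the unique positive boundary crossing at $\delta\lambda_i=2\sin\bigl(\pi/(2(2d+1))\bigr)$.

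Solving this relation for $d$ treated momentarily as a real parameter produces precisely the critical value
\[
\hat d_i=\frac{1}{2}\!\left(\frac{\pi}{2\arcsin(\delta\lambda_i/2)}-1\right)
\]
appearing in~\eqref{eq::stability con_dis}. The hypothesis $\delta\in(0,2/\lambda_N)$ guarantees $\delta\lambda_i/2\in(0,1)$ so the arcsine is well defined, and also ensures stability at $d=0$ via~\eqref{eq::delta_rang}. Thus the $i$-th scalar subsystem is stable precisely for integer delays $d<\hat d_i$ and unstable for $d>\hat d_i$; requiring simultaneous stability of all $N-1$ modes yields the joint condition $d<\min_{i\in\{2,\ldots,N\}}\hat d_i$, and the smallest non-negative integer that fails this condition is exactly the threshold $\bar d$ displayed in~\eqref{eq::stability con_dis}, which pins down the claimed range of admissible delays.

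The step I expect to be the main obstacle is verifying that as the real parameter $d$ crosses $\hat d_i$ the unit-circle root truly migrates outward rather than only touching the unit circle tangentially, so that the stability characterization is an "if and only if" on the integer range rather than merely sufficient. I would discharge this with a short root-continuity and implicit-differentiation argument applied to $s^{d+1}-s^d+\delta\lambda_i=0$ at the boundary value: showing that $\partial|s|/\partial d>0$ at $|s|=1$ certifies a genuine transversal crossing, and, combined with the $d=0$ stability baseline from~\eqref{eq::delta_rang}, locks in the bidirectional implication claimed in the lemma.
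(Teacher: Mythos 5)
Your proposal is correct and follows essentially the same route the paper sketches for this (cited) lemma: the modal decomposition via $\vect{T}$ into the scalar delay difference equations~\eqref{eq::laclacian_equivalent_z2}, the Levitskaya stability-oval criterion applied to~\eqref{eq::char-lambda-i}, and solving the boundary relation $\delta\lambda_i=2\sin\bigl(\tfrac{\pi}{2(2d+1)}\bigr)$ for $d$ to obtain $\hat d$, with the worst mode dictating $\bar d$. The only cosmetic caveat is the sign convention on where the printed oval meets the real axis (the Levin--May bound applies to $p=\delta\lambda_i>0$ in $z(k+1)-z(k)+p\,z(k-d)=0$), but your threshold and the resulting formula for $\hat d$ are exactly the ones in~\eqref{eq::stability con_dis}, and your transversality worry is already discharged by the fact that the oval criterion is itself an if-and-only-if characterization.
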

According to the results in~\cite{HM-SSK:18}, we can show that the average consensus algorithm~\eqref{eq::consensus-orig} with $\delta\in(0,\frac{2}{\lambda_N})$ is guaranteed to tolerate at least one step delay. Given~\eqref{eq::laclacian_equivalent}, the convergence factor of the average consensus algorithm~\eqref{eq::consensus-orig} is 
\begin{align}
    \mathsf{r}_d=\max \{\mathsf{r}_{d,i}\}_{i=2}^N,
\end{align}
where $\mathsf{r}_{d,i}$ is the convergence factor of scalar dynamics~\eqref{eq::laclacian_equivalent_z2}, $i\in\{2,\cdots,N\}$.

The following result, whose proof is given in the appendix, specifies the range of delay $d>0$ in which the convergence factor of~\eqref{eq::laclacian_equivalent_z2} is less than of~\eqref{eq::laclacian_equivalent_z2} with $d=0$.

\begin{thm}[The range of delay which decreases the convergence factor of delay-free system  of~\eqref{eq::laclacian_equivalent_z2}]\label{lem::beta-dis_scalar}
{\rm Consider  system~\eqref{eq::laclacian_equivalent_z2}, the convergence factor for the solution of \eqref{eq::laclacian_equivalent_z2} is less than delay free case, i.e. $d=0$, if  
\begin{align}\label{eq::delay_bound_dis_rate}
  d<\min\left\{\frac{\textup{ln}(\frac{\delta\lambda_i}{\sqrt{(1-\delta\lambda_i)^2+1-2|1-\delta\lambda_i|\cos{\phi}}})}{\textup{ln}(|1-\delta\lambda_i|)},\frac{|1-\delta\lambda_i|}{1-|1-\delta\lambda_i|}\right\}, 
\end{align}
where $\phi\in[0,\frac{\pi}{d+1}]$ is the solution of $\frac{\sin{d\phi}}{\sin{(d+1)\phi}}=\frac{1}{|1-\delta\lambda_i|}$. Moreover, the convergence factor is a decreasing function of delay if $\frac{d^d}{(d+1)^{d+1}}<\{\delta\lambda_i\}_{i=2}^{N}$ holds.}
\end{thm}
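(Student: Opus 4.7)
Set $a := |1-\delta\lambda_i|$; from~\eqref{eq::r0} the $d=0$ convergence factor of~\eqref{eq::laclacian_equivalent_z2} is exactly $a$, while for general $d$ the factor $\mathsf{r}_{d,i}$ equals the spectral radius of the characteristic polynomial $\mathcal{T}_d(s) := s^{d+1} - s^d + \delta\lambda_i$. The plan is to locate the smallest $d^{\star}>0$ at which $\mathcal{T}_d$ acquires a root on the circle $|s|=a$, and then argue by continuous deformation in $d$ that $\mathsf{r}_{d,i}<a$ for every $d<d^{\star}$.

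Assume $\mathcal{T}_d$ has a root on $|s|=a$ and, using conjugate symmetry, write it as $s = ae^{\ii\phi}$ with $\phi\in[0,\pi]$. Substituting into $\mathcal{T}_d(s)=0$ and separating the real and imaginary parts of the equivalent identity $a^d e^{\ii d\phi}(ae^{\ii\phi}-1)=-\delta\lambda_i$, I obtain
\[
a\sin((d+1)\phi)=\sin(d\phi),\qquad a^d\sqrt{a^2+1-2a\cos\phi}=\delta\lambda_i.
\]
Solving the magnitude condition for $d$ (and using $\ln a<0$ for stability so the sign works out) produces the first term of the min in~\eqref{eq::delay_bound_dis_rate}. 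For the phase equation, I would examine $g(\phi):=\sin(d\phi)/\sin((d+1)\phi)$: a short derivative calculation shows $g$ is strictly increasing on $(0,\pi/(d+1))$ with $g(0^+)=d/(d+1)$ and $g(\phi)\to+\infty$ as $\phi\to(\pi/(d+1))^-$, so $g(\phi)=a$ admits a solution iff $a>d/(d+1)$, i.e.\ $d<a/(1-a)$—the second term of the min. When either of these bounds is violated the circle $|s|=a$ contains no root of $\mathcal{T}_d$, so enforcing $d<\min\{\cdot,\cdot\}$ strictly, together with a continuous-deformation argument starting from the $d=0$ configuration, yields $\mathsf{r}_{d,i}<a$.

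For the ``moreover'' part, the hypothesis $d^d/(d+1)^{d+1}<\delta\lambda_i$ is exactly the condition of~\cite{SAK:94} under which all roots of $\mathcal{T}_d$ are simple (equality being the threshold where a positive real double root emerges). Treating $d$ as a continuous parameter, I would implicitly differentiate $\mathcal{T}_d(s(d))=0$ in $d$, write $s=|s|e^{\ii\theta}$, extract $d|s|/dd$, and simplify using the boundary relations above to a strictly negative expression on the max-modulus branch, giving monotonicity. The most delicate ingredient throughout is exactly this continuity-in-$d$ step, because $d$ is integer-valued in~\eqref{eq::laclacian_equivalent_z2} while the proof slides $d$ through non-integer values: one must verify that no root of $\mathcal{T}_d$ quietly crosses $|s|=a$ at a non-integer $d$ and that the branch attaining the spectral radius retains dominance. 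When several branches compete for the maximum modulus, the monotonicity claim also requires arguing with the outer envelope of $|s_j(d)|$ rather than any single smooth branch, which I expect to be the main technical obstacle.
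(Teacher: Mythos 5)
Your boundary analysis is a genuinely different route from the paper's: the paper never touches the characteristic equation directly, but simply invokes Lemma~\ref{thm::dis_time} (the Kuruklis/Levitskaya root-location result quoted in the appendix) with $|a|=1/|1-\delta\lambda_i|$ and $c=\delta\lambda_i$, so both terms of the min in \eqref{eq::delay_bound_dis_rate} fall out of that lemma's two conditions by substitution. Your computation with $s=ae^{\ii\phi}$ in \eqref{eq::char-lambda-i} correctly exposes where those conditions come from: the magnitude equation yields the logarithmic bound, and the solvability threshold $a>d/(d+1)$ of the phase equation yields $d<a/(1-a)$. (Incidentally, your phase equation $\sin(d\phi)/\sin((d+1)\phi)=a$ is what direct substitution actually produces --- the circle radius --- which is worth flagging against the reciprocal appearing in the theorem statement.)

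The gap is the sufficiency direction, which is the entire content of the theorem: showing that no root sits \emph{on} the circle $|s|=|1-\delta\lambda_i|$ under \eqref{eq::delay_bound_dis_rate} does not show that all roots lie \emph{inside} it, and the ``continuous deformation in $d$'' you defer to cannot work as stated. For non-integer $d$ the function $s^{d+1}-s^d+c$ is not a polynomial (it is multivalued), and at each integer the degree --- hence the number of roots --- jumps by one, so there is no continuous root system in $d$ to deform; in addition, restricting the phase equation to $\phi\in[0,\pi/(d+1)]$ does not by itself exclude roots on the circle with larger argument, so ``no solution of the phase equation'' does not yet mean ``no root on the circle.'' The workable homotopy is in $c$ at fixed integer $d$ (track the dominant root from small $c$, where the root locations are known, until it first reaches the circle), which is precisely what \cite{SAK:94} and \cite{ISL:05} carry out and what Lemma~\ref{thm::dis_time} packages as an if-and-only-if statement. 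The same remark applies to the ``moreover'' claim: it follows from that lemma's monotonicity-in-$c$ statement at fixed $d$, not from implicit differentiation along a continuous $d$. Either invoke Lemma~\ref{thm::dis_time} directly, as the paper does, or replace the deformation in $d$ by a fixed-$d$ Rouch\'e/argument-principle count in $c$.
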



 By virtue of~\eqref{eq::x_convergence}, Theorem~\ref{lem::beta-dis_scalar} provides the range of delay in terms of the eigenvalues of the Laplacian matrix such that the algorithm~\eqref{eq::consensus-orig_dated} is guaranteed to have lower convergence factor in comparison to the delay-free algorithm~\eqref{eq::consensus-orig}, i.e., it converges faster than~\eqref{eq::consensus-orig}. Therefore, given the topology of the network, faster convergence can be achieved by using outdated feedback for a fixed value of stepsize. We also note here that the range of $d$ in~\eqref{eq::delay_bound_dis_rate} provides the sufficient condition for a faster convergence while the exact range can be beyond that.

\section{Accelerated average algorithm via first-order accelerated optimization algorithms} \label{sec::acc}
In this section, we use the first-order accelerated optimization framework to devise accelerated average consensus algorithms that have proven faster convergence than the well-known average consensus algorithm~\eqref{eq::consensus-orig}. The work in this section is inspired by the results in~\cite{bu2018accelerated}.~\cite{bu2018accelerated} argued that the conventional average consensus algorithm~\eqref{eq::consensus-orig} can be viewed as the gradient descent algorithm with fixed stepsize $\delta\in(0,\frac{2}{\lambda_N})\subset\real_{>0}$ where the cost function is the agreement potential $f(\vect{x})=\frac{1}{2}\vect{x}^\top\vect{L}\vect{x}$. Note here that $\vect{0}\leq \nabla^2 f(\vect{x})=\vect{L}\leq \lambda_N\vect{I}$. Based on this observation and since the cost function~$f(\vect{x})=\frac{1}{2}\vect{x}^\top\vect{L}\vect{x}$ is convex,~\cite{bu2018accelerated} proposes to use the first-order accelerated NAG-C optimization algorithm  
\begin{subequations} \label{eq::ref15}
\begin{align}
    \vect{y}(k+1)&=\vect{x}(k)-\delta\nabla f(\vect{x}(k)),\\
    \vect{x}(k+1)&=\vect{y}(k+1)+\frac{k+1}{k+3}(\vect{y}(k+1)-\vect{y}(k)),
\end{align}
\end{subequations}
with $\vect{x}(0)=\vect{y}(0)=\vectsf{r}$, $\vectsf{r}=[\mathsf{r}^1,\cdots,\mathsf{r}^N]^{\top}$,  where $\nabla f(\vect{x}(k))=\vect{L}\vect{x}(k)$. The choice of coefficient $\frac{k+1}{k+3}$, which tends to one, is fundamental for the argument used by Nesterov to establish the following inverse quadratic convergence rate of $f(\vect{x}(k))-f(\vect{x}^\star)\leq O\left(\frac{1}{\delta \,k^2}\right)$, for any stepsize $0<\delta\leq 1/\lambda_N$, with the best step size being $\delta=\frac{1}{\lambda_N}$. Since the step size is dependent on $\lambda_N$, this algorithm requires all the agents to know this global piece of information.

For $\mu-$strongly convex objective $f$ with $L-$Lipschitz gradients, i.e., $\mu\vect{I}\leq \nabla^2 f(\vect{x})\leq L\vect{I}$, the NAG-SC algorithm achieves a faster linear convergence of $f(\vect{x}(k))-f(\vect{x}^\star)\leq O((1-\sqrt{\delta \mu})^k)$ for any stepsize of $\delta\in(0,\frac{1}{L}]$ with the best rate being achieved at $\delta=\frac{1}{L}$. The fastest accelerated globally convergent gradient-based algorithm for $\mu-$strongly convex objective $f$ with $L-$Lipschitz gradients however, is the TM algorithm proposed in~\cite{BVS-RAF-KML:18}, which achieves $f(\vect{x}(k))-f(\vect{x}^\star)\leq O\left((1-\sqrt{\frac{\mu}{L}})^{2k}\right)$. Building on the structure of these two optimization algorithms, we propose the following TM-based accelerated average consensus algorithm
\begin{subequations} \label{eq::TM}
\begin{align}
    \vect{\xi}(k+1)&=(1+\beta)\vect{\xi}(k)-\beta\vect{\xi}(k-1)-\alpha\vect{L}\vect{y}(k),\\
    \vect{y}(k)&=(1+\gamma)\vect{\xi}(k)-\gamma\vect{\xi}(k-1),\\
    \vect{x}(k)&=(1+\delta)\vect{\xi}(k)-\delta\vect{\xi}(k-1),
\end{align}
\end{subequations}
where $(\alpha,\beta,\gamma,\delta)=\Big(\frac{1+\rho}{\lambda_{N}},\frac{\rho^2}{2-\rho},\frac{\rho^2}{(1+\rho)(2-\rho)},\frac{\rho^2}{1-\rho^2}\Big)$, $\rho~=~1-\sqrt{\frac{\lambda_{N}}{\lambda_{2}}}$, $\vect{\xi}(0)=\vect{\xi}(1)=\vectsf{r}$, and the NAG-SC-based  accelerated average consensus algorithm
\begin{subequations} \label{eq::NAG-SC}
\begin{align}
    \vect{x}(k+1)&=\vect{y}(k)-\alpha\vect{L}\vect{y}(k),\\
    \vect{y}(k)&=(1+\beta)\vect{x}(k)-\beta\vect{x}(k-1),
\end{align}
\end{subequations}
with $(\alpha,\beta)=\Big(\frac{1}{\lambda_{N}},\frac{\sqrt{\lambda_{N}}-\sqrt{\lambda_{2}}}{\sqrt{\lambda_{N}}+\sqrt{\lambda_{2}}}\Big)$, $\vect{x}(0)=\vect{x}(1)=\vectsf{r}$. 
In the following, we prove the convergence of $x^i\to\frac{1}{N}\sum_{i=1}^{N}\mathsf{r}^i$ as $k\to\infty$ for the TM-based algorithm~\eqref{eq::TM}, and omit the proof for the NAG-SC algorithm, since a similar approach can be applied.

Consider the change of variable $\begin{bmatrix}w_{1} \quad \vect{w}^{\top}_{2:N}\end{bmatrix}^{\top}=\vect{T}^\top\vect{\xi}$, $\begin{bmatrix}q_{1} \quad \vect{q}^{\top}_{2:N}\end{bmatrix}^{\top}=\vect{T}^\top\vect{y}, \begin{bmatrix}p_{1} \quad \vect{p}^{\top}_{2:N}\end{bmatrix}^{\top}=\vect{T}^\top\vect{x},$
where $\vect{T}$ is defined in~\eqref{eq::T}. Then,~\eqref{eq::TM} can be written in an equivalent form
\begin{subequations}\label{eq::TMequ}
\begin{align}
    w_{1}(k+1)&=(1+\beta)w_{1}(k)-\beta w_{1}(k-1), \label{eq::TMequ-a}\\
    q_1(k)&=(1+\gamma)w_1(k)-\gamma w_1(k-1), \label{eq::TMequ-b} \\
    p_1(k)&=(1+\delta)w_1(k)-\delta w_1(k-1), \label{eq::TMequ-c} \\
    \vect{w}_{2:N}(k+1)&=(1+\beta)\vect{w}_{2:N}(k)-\beta \vect{w}_{2:N}(k-1) \nonumber \\
    &-\alpha\vect{L}^{+}\vect{y}_{2:N}(k), \label{eq::TMequ-d}\\
    \vect{p}_{2:N}(k)&=(1+\gamma)\vect{w}_{2:N}(k)-\gamma \vect{w}_{2:N}(k-1), \label{eq::TMequ-e} \\
    \vect{q}_{2:N}(k)&=(1+\delta)\vect{w}_{2:N}(k)-\delta \vect{w}_{2:N}(k-1), \label{eq::TMequ-f},
\end{align}
\end{subequations}
where $\vect{L}^{+}=\vect{R}^{\top}\vect{L}\vect{R}$. The following theorem proves that~\eqref{eq::TM} is a solution for the average consensus problem. The reason that we can use the TM and NAG-SC algorithms to design our accelerated average consensus algorithms reveals itself in the proof this theorem.  
\begin{thm}\label{thm::TM_based_Convereg}
    Consider a network of $N$ agents communicating over a connected graph. Let the agents of the network implement algorithm~\eqref{eq::TM}. Then, $x^i\to\frac{1}{N}\sum_{i=1}^{N}\mathsf{r}^i$ as $k\to\infty$.
\end{thm}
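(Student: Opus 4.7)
The plan is to exploit the block-diagonal structure exposed by the $\vect{T}^\top$ similarity transformation in~\eqref{eq::TMequ}: the first coordinate of each transformed state is decoupled from the Laplacian-driven part, and the remaining $(N-1)$ coordinates satisfy exactly the standard TM recursion applied to the strongly convex quadratic $g(\vect{w}_{2:N}) = \tfrac{1}{2}\vect{w}_{2:N}^\top \vect{L}^+ \vect{w}_{2:N}$. Once this decomposition is in hand, the proof reduces to verifying that the first coordinate stays at the correct constant value while the remaining coordinates are driven to zero by the known linear convergence of TM.

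First I would handle the scalar subsystem~\eqref{eq::TMequ-a}--\eqref{eq::TMequ-c}. The initialization $\vect{\xi}(0)=\vect{\xi}(1)=\vectsf{r}$ gives $w_1(0)=w_1(1)=\tfrac{1}{\sqrt{N}}\sum_{i=1}^N \mathsf{r}^i$, and from the recursion $w_1(k+1)=(1+\beta)w_1(k)-\beta w_1(k-1)$ a trivial induction shows $w_1(k)=\tfrac{1}{\sqrt{N}}\sum_{i=1}^N \mathsf{r}^i$ for all $k\geq 0$. Substituting into~\eqref{eq::TMequ-c} yields $p_1(k)=w_1(k)=\tfrac{1}{\sqrt{N}}\sum_{i=1}^N \mathsf{r}^i$ for every $k$.

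Next I would treat the vector subsystem~\eqref{eq::TMequ-d}--\eqref{eq::TMequ-f}. The matrix $\vect{L}^+=\vect{R}^\top\vect{L}\vect{R}$ is symmetric positive definite with eigenvalues $\{\lambda_i\}_{i=2}^N$, so $g$ is $\lambda_2$-strongly convex with $\lambda_N$-Lipschitz gradient and its unique minimizer is $\vect{w}_{2:N}^\star=\vect{0}$. The key observation is that~\eqref{eq::TMequ-d}--\eqref{eq::TMequ-f} is exactly the TM algorithm of~\cite{BVS-RAF-KML:18} applied to $g$ with the prescribed parameters $(\alpha,\beta,\gamma,\delta)$ matching the canonical TM tuning for $\mu=\lambda_2$, $L=\lambda_N$. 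Invoking the TM convergence guarantee, one obtains $\vect{w}_{2:N}(k)\to\vect{0}$, hence $\vect{p}_{2:N}(k)=(1+\gamma)\vect{w}_{2:N}(k)-\gamma \vect{w}_{2:N}(k-1)\to\vect{0}$.

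Finally I would transform back. By construction $\vect{x}(k)=\vect{T}\bigl[p_1(k),\,\vect{p}_{2:N}(k)^\top\bigr]^\top$, so using the definition of $\vect{T}$ in~\eqref{eq::T} and the limits just established,
\begin{equation*}
\lim_{k\to\infty}\vect{x}(k)=\tfrac{1}{\sqrt{N}}\vect{1}_N\cdot\tfrac{1}{\sqrt{N}}\sum_{i=1}^N \mathsf{r}^i+\vect{R}\cdot\vect{0}=\Bigl(\tfrac{1}{N}\sum_{i=1}^N \mathsf{r}^i\Bigr)\vect{1}_N,
\end{equation*}
which is the claim. The only step that is nontrivial is the identification of~\eqref{eq::TMequ-d}--\eqref{eq::TMequ-f} as an instance of TM on $g$ with valid parameters; everything else is bookkeeping on the transformed coordinates. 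I expect this identification (and a careful check that the prescribed tuple $(\alpha,\beta,\gamma,\delta)$ is admissible, i.e., $\rho\in(0,1)$ when $\lambda_2<\lambda_N$) to be the main substantive step, after which the linear convergence of TM does all the work.
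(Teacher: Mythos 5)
Your proposal follows essentially the same route as the paper's proof: decompose via the orthogonal transformation $\vect{T}$, observe that the first transformed coordinate stays constant at $\tfrac{1}{\sqrt{N}}\sum_i\mathsf{r}^i$, identify the remaining $(N-1)$ coordinates as the TM iteration applied to the $\lambda_2$-strongly convex quadratic built from $\vect{L}^{+}=\vect{R}^\top\vect{L}\vect{R}$ with $\lambda_N$-Lipschitz gradient, and invoke the TM convergence guarantee to drive them to zero. Your write-up is in fact slightly more careful than the paper's on the bookkeeping --- the induction showing $w_1(k)$ is constant, the $1/\sqrt{N}$ normalization in the back-transformation, and the admissibility check $\rho\in(0,1)$, which as you implicitly note requires reading the paper's $\rho$ as $1-\sqrt{\lambda_2/\lambda_N}$ rather than the inverted ratio as printed.
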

\begin{proof}
Let us consider the equivalent form of the TM algorithm in~\eqref{eq::TMequ}. From \eqref{eq::TMequ-a}-\eqref{eq::TMequ-c}, it is trivially concluded that $q_1(k)=p_1(k)=w_1(0)$ for $k\in\mathbb{Z}_{>0}$. On the other hand, since $\vect{L}^{+}$ is a positive definite matrix with eigenvalues $\lambda_2,\cdots,\lambda_N$, by virtue of the TM algorithm of~\cite{bu2018accelerated}, \eqref{eq::TMequ-d}-\eqref{eq::TMequ-f} minimize the $\lambda_2$-strongly convex function $f(\vect{p}_{2:N})=\frac{1}{2}\vect{p}_{2:N}\vect{L}^{+}\vect{p}_{2:N}$ with $\lambda_N$-Lipschitz gradient to the optimal point $\vect{p}^{\star}_{2:N}=\vect{0}$ with a rate of convergence of $f(\vect{p}_{2:N}(k))-f(\vect{0})\leq O\left(\left(1-\sqrt{\frac{\lambda_2}{\lambda_N}}\right)^{2k}\right)$. Therefore, as $k\to\infty$, $\vect{p}_{2:N}\to\vect{0}$. Considering the change of variables, we know that $q_1(0)=\frac{1}{N}\sum_{i=1}^{N}\mathsf{r}^i$ and thus, $x^i\to\frac{1}{N}\sum_{i=1}^{N}\mathsf{r}^i$ as $k\to\infty$. 
\end{proof}

We showed in the result above that the algorithm presented in~\eqref{eq::TM} solves the average consensus problem. A Similar result can be induced for~\eqref{eq::NAG-SC}. Based on the developments in~\cite{bu2018accelerated}, it is proved that \eqref{eq::ref15} converges to the average of local reference values asymptotically and faster than the popular solution~\eqref{eq::consensus-orig}. Moreover, from~\cite{BVS-RAF-KML:18}, we know that the TM algorithm benefits the fastest exponential convergence rate among other accelerated first-order gradient methods in optimization, e.g.,~\eqref{eq::ref15}. We illustrate this comparison in a numerical example in the following section with also simulating the algorithm in~\cite{TCA-BNO-MJC:08}. However, similar to~\cite{TCA-BNO-MJC:08}, we note that faster convergence in~\eqref{eq::TM} and \eqref{eq::NAG-SC} comes with the trade-off of requiring the agents to know $\lambda_2$ and $\lambda_N$ globally in order to compute $(\alpha,\beta,\gamma,\delta)$.
This trade-off also applies to~\eqref{eq::ref15} where $\lambda_N$, as a global information, is essential to compute $\delta$ locally. Therefore, by having extra global information, faster convergence can be achieved using the presented accelerated solutions.
\section{Numerical Examples}\label{sec::Num_ex}
In this section, we investigate the numerical examples that show the effect of outdated feedback data and the implementation of accelerated first-order optimization methods. We use real data from~\cite{utts2021mind} to solve a linear regression problem by reformulating it as two average consensus problems. In the first example, the common Laplacian average consensus algorithm is compared with the proposed algorithm in~\eqref{eq::consensus-orig_dated} with different number of delays to analyze the effect of $d$ on convergence. In the second example, convergence of the proposed accelerated algorithms~\eqref{eq::ref15}-\eqref{eq::NAG-SC} are compared against \eqref{eq::consensus-orig_dated} and the algorithm in~\cite{TCA-BNO-MJC:08}.

The dataset in~\cite{utts2021mind} has a size of 50 points for the 50 states in the United States. The variables are $y$ which is year 2002 birth rate per 1000 females 15 to 17 years old, and $x$ which is the poverty rate (the percent of the state’s population living in households with incomes below the federally defined poverty level). The objective is to find a linear relation between $x$ and $y$, i.e., solve the following problem:
\begin{align}\label{eq::regression}
    \textup{min}_{a,b\in\mathbb{R}}\sum\nolimits_{i=1}^{50}\|y^i-(ax^i+b)\|^2
\end{align}
for $a$ and $b$. Since our goal is to study the effect of delay in convergence, we simplify the problem and assume to know the optimal value of $b$. Thus, we only solve for the variable $a$ as the slope of the fitted line. Here, $b=4.267$. By taking a derivative from~\eqref{eq::regression} with respect to $a$, setting it to zero and substituting the value of $b$, we conclude that
\begin{align}\label{eq::RegressionSolution}
    a=\frac{\sum_{i=1}^{50}x^i(y^i-b)}{\sum_{i=1}^{50}(x^i)^2}.
\end{align}
Let us now solve this problem while the dataset is distributed over a network of 5 agents where each agent has access to 10 arbitrary state data points. Agents of the network communicate over a connected graph depicted in Fig.~\ref{fig::graph}. This solution can be achieved by solving two average consensus problems for the nominator and the denominator, and computing the division. Let $\eta_1^i(k)$ and $\eta_2^i(k)$ denote agent $i$'s local estimate of the nominator and the denominator of~\eqref{eq::RegressionSolution}, respectively. The reference values of the first average consensus problem are $\mathsf{r}_1^1=\sum_{i=1}^{10}x^i(y^i-b)$, $\mathsf{r}_1^2=\sum_{i=11}^{20}x^i(y^i-b)$,$\cdots$, $\mathsf{r}_1^5=\sum_{i=41}^{50}x^i(y^i-b)$. The same process is applied to the second average consensus problem with the reference values $\mathsf{r}_2^1=\sum_{i=1}^{10}(x^i)^2$, $\mathsf{r}_2^2=\sum_{i=11}^{20}(x^i)^2$,$\cdots$, $\mathsf{r}_2^5=\sum_{i=41}^{50}(x^i)^2$. Each agent computes the estimate $\hat{a}^i(k)=\frac{\eta_1^i(k)}{\eta_2^i(k)}$ at every step $k\in\mathbb{Z}_{\geq0}$.

\emph{Outdated feedback}:
Let the agents implement algorithm~\eqref{eq::consensus-orig_dated} with three different values of $d=\{1,5,10\}$ and a delay-free case. Fig.~\ref{fig::error1} depicts the convergence error of agent's trajectories with respect to the optimal value in~\eqref{eq::RegressionSolution}, i.e., $e(k)=\textup{log}\sum_{i=1}^{N}(\hat{a}^i(k)-a)^2$. The Green line, representing the implementation of one step delay, as seen in the figure, converge faster than the delay-free case. By increasing $d$ and using further outdated feedback, faster convergence is achieved. This shows the effect of delay in our analysis. However, as mentioned previously, exceeding $\bar{d}$ may result in divergence or slower convergence. The Turquoise trajectories with $d=10$ illustrate the fluctuation in convergence.

\emph{Accelerated consensus via first-order accelerated optimization algorithms}:
Next, we compare the convergence rate of the accelerated algorithms~\eqref{eq::ref15}, \eqref{eq::TM} and \eqref{eq::NAG-SC} with that of the accelerated algorithm proposed in~\cite{TCA-BNO-MJC:08} and also algorithm~\eqref{eq::consensus-orig_dated}. Let the agents of the network solve two average consensus problems to reach $a$ globally. Fig.~\ref{fig::error2} shows the convergence error trajectories $e(k)=\textup{log}\sum_{i=1}^{N}(\hat{a}^i(k)-a)^2$ reaching the agreement similar to the previous example. Algorithm~\eqref{eq::consensus-orig_dated} with $d=5$ converges slower compared to others, while the TM algorithm converges the fastest. Despite using   the optimal parameters for the algorithm of  in~\cite{TCA-BNO-MJC:08}, still TM-based algorithm delivers the fastest convergence.
\begin{figure}
    \centering
    \subfloat[]{\includegraphics[trim= 1pt 5pt 0 0 ,clip,width=.8\linewidth]{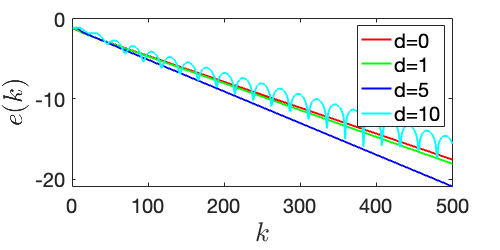}\label{fig::error1}} \vspace{-0.1in}\\
    \subfloat[]{\includegraphics[trim= 1pt 5pt 0 0 ,clip,width=.8\linewidth]{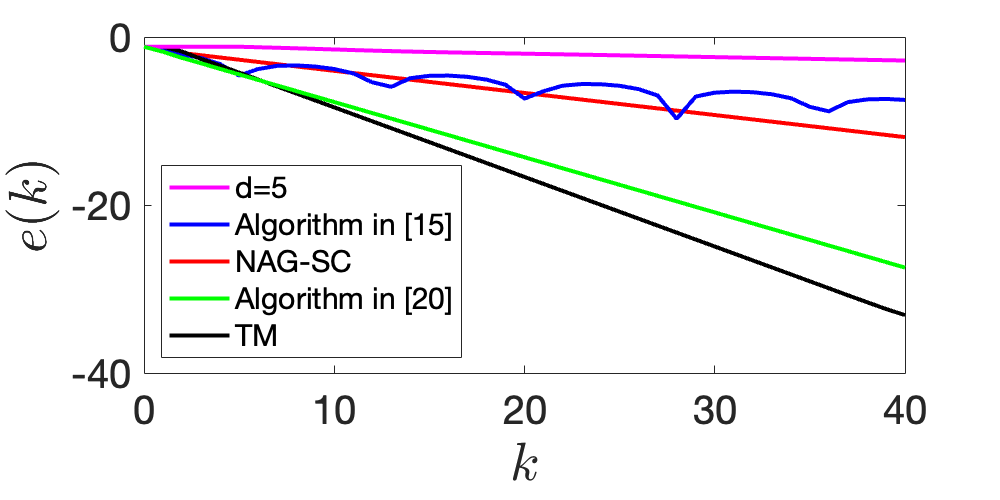}\label{fig::error2}}
    \caption{{\small Plot (a) compares the effect of different values of delay ($d=\{0,1,5,10\}$) on the convergence error $e(k)$ over $k$. We can see that $d=5$ generates the best results. Plot (b) shows the comparison between the optimization inspired algorithms~\eqref{eq::ref15}-\eqref{eq::NAG-SC} and the algorithm~\eqref{eq::consensus-orig_dated} with $d=5$. As expected from the results in section~\ref{sec::acc}, algorithm~\eqref{eq::TM} converges faster than the other methods.
    }}\vspace{-0.1in}
    \label{fig::error}
\end{figure}
\vspace{-5pt}
\section{Conclusion}\label{sec::Con}\vspace{-2pt}
We examined the effect of time delay on the rate of convergence of a discrete linear time-delayed system. We showed that for this system, we can attain higher convergence rate by adding delayed feedback. The range of delay for faster convergence was then obtained. Next, we introduced an accelerated average consensus algorithm inspired by the NAG-SC and TM algorithms. We showed that the proposed algorithms have a better convergence rate compared to a similar approach in the literature~\cite{bu2018accelerated}. To study the results of this letter, we solved a distributed linear regression problem consisted of two average consensus problems. We showed that the time-delayed system converges faster for some ranges of delay than the delay-free case. It was also indicated that the TM algorithm achieves faster convergence compared to the algorithms presented in~\cite{bu2018accelerated} and \cite{TCA-BNO-MJC:08}.
An important conclusion that can be drawn from our study is that restricting the structure of the average consensus algorithm to the Laplacian-based form~\eqref{eq::consensus-orig_dated} limits that benefits one can get from use of buffered information to achieve faster convergence. Relaxing the composition of the algorithm, allows us to arrive at faster average consensus algorithms using the well-established accelerated optimization algorithms.

\section*{Appendix}
We use the following auxiliary lemma in the proof of Theorem~\ref{lem::beta-dis_scalar}, which we present afterwards.
\begin{lem}[Location of roots of $s^{d+1}-s^d+c=0$~\cite{SAK:94}]\label{thm::dis_time}{\rm
Let $\mathcal{S}=\{s^i\in\mathbb{C}|s^{d+1}-s^d+c=0\}$ and $s^1=\max\{|s^i|\,|s^i\in\mathcal{S}\}$ for any $i\in\{1,\cdots,d+1\}$. Then for  $d\in\mathbb{Z}_{>1}$   and $c\in\real_{>0}$ all the roots    are inside the disk $|s|<\frac{1}{|a|}$ if and only if $|a|<\frac{d+1}{d}$ and
\begin{align}
    \frac{(|a|-1)}{|a|^{d+1}}<c<\frac{\sqrt{a^2+1-2|a|\cos\phi}}{|a|^{d+1}}
\end{align}
where $\phi\in[0,\frac{\pi}{d+1}]$ is the solution of $\frac{\sin(d\phi)}{\sin((d+1)\phi)}=\frac{1}{|a|}$.  

Moreover if $0<c<\frac{d^d}{(d+1)^{d+1}}$, as $c$ increases then the  value of 
$|s^1|$ decreases while the absolute value of 
all the other roots increase. In addition the smallest value of $|s^1|$   occurs at  $c=\frac{d^d}{(d+1)^{d+1}}$ where $|s^1| =\frac{d}{d+1}$.}\boxend
\end{lem}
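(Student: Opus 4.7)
My plan is to exploit the fact that roots of a polynomial depend continuously on its coefficients, so as $c$ varies the roots of $p(s) := s^{d+1} - s^d + c$ can only cross the boundary $|s| = 1/|a|$ at isolated values of $c$. A convenient rescaling is $w = |a|\,s$, which maps the target disk to the open unit disk and, after multiplying $p(s)=0$ by $|a|^{d+1}$, transforms the problem into
\begin{equation*}
q(w) := w^{d+1} - |a|\,w^{d} + c\,|a|^{d+1} = 0 .
\end{equation*}
Equivalently, the task becomes: for which $c>0$ do all roots of $q$ lie in $\{|w|<1\}$? I will parametrize every crossing of $|w|=1$, count the roots inside at a convenient reference value, and trace the count as $c$ grows.

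For the boundary-crossing step I would substitute $w = e^{i\phi}$ into $q(w)=0$ and separate real and imaginary parts. The imaginary part yields $\sin((d+1)\phi) = |a|\sin(d\phi)$, which is exactly the angle equation of the lemma. A short study of $f(\phi) = \sin((d+1)\phi)/\sin(d\phi)$ on $(0,\pi/(d+1)]$ --- computing $f(0^+) = (d+1)/d$, $f(\pi/(d+1)) = 0$, and verifying strict monotone decrease --- shows that this equation admits a unique $\phi \in (0,\pi/(d+1))$ precisely when $0<|a|<(d+1)/d$, explaining the first necessary condition of the lemma. Writing the real part as $c\,|a|^{d+1} = |a|\cos(d\phi) - \cos((d+1)\phi)$ and noting that $z := |a|\,e^{id\phi} - e^{i(d+1)\phi}$ has vanishing imaginary part (by the angle equation) while $|z|^2 = |a|^2 + 1 - 2|a|\cos\phi$, one reads off the upper critical value $c = \sqrt{a^2+1-2|a|\cos\phi}/|a|^{d+1}$. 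The real-axis crossing at $w=1$ (equivalently $s=1/|a|$) is handled directly by $q(1)=0$, giving the lower critical value $(|a|-1)/|a|^{d+1}$. For the root count I would observe that at $c=0$ the roots of $q$ are $w=0$ (multiplicity $d$) and $w=|a|$: for $|a|<1$ all roots already lie strictly inside $|w|<1$ (so the lower bound is automatic), while for $|a|>1$ exactly one root sits outside and first re-enters the disk through the real point $w=1$ at $c=(|a|-1)/|a|^{d+1}$; the first subsequent exit occurs at the stated upper bound, yielding the claimed "if and only if."

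For the second (monotonicity) part I would return to $p(s) = s^{d}(s-1)+c$, $p'(s) = s^{d-1}((d+1)s-d)$. The only interior positive critical point is $s=d/(d+1)$, where $p(d/(d+1)) = c - d^{d}/(d+1)^{d+1}$, so for $0<c<d^{d}/(d+1)^{d+1}$ there are exactly two positive real roots, one in $(d/(d+1),1)$ and one in $(0,d/(d+1))$. The former equals $|s^{1}|$ throughout this range, since a perturbation calculation near $c=0$ shows the remaining $d$ roots bifurcate out of $s=0$ with modulus $\approx c^{1/d} < d/(d+1)$. Implicit differentiation of $p(s)=0$ gives $ds/dc = -1/p'(s)$, so on the branch $s>d/(d+1)$ the root strictly decreases in $c$ (giving $|s^{1}|$ decreasing), while on $s<d/(d+1)$ it strictly increases. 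For the remaining roots I would combine $\prod_{i} s^{i} = (-1)^{d+1} c$ with a Puiseux expansion around $c=0$ to confirm that each individual modulus in the complex cluster near $s=0$ grows with $c$ rather than merely their product. The two positive real branches coalesce precisely at $c = d^{d}/(d+1)^{d+1}$, $s=d/(d+1)$, yielding the minimum $|s^{1}|=d/(d+1)$.

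The main obstacle I anticipate is the "only if" direction: I must rule out additional boundary crossings inside the putative admissible $c$-interval. Concretely, this requires a direct calculus argument that the map $\phi\mapsto c(\phi) = \sqrt{|a|^{2}+1-2|a|\cos\phi}/|a|^{d+1}$ (restricted to the unique solution branch of the angle equation on $(0,\pi/(d+1))$) is monotonic, so the critical-$c$ curve is traversed exactly once, and a check that the remaining real crossing at $w=-1$ (which occurs only when $d$ is even, at $c=(1+|a|)/|a|^{d+1}$) lies strictly above the claimed upper bound. These two monotonicity/localization checks are the most delicate ingredients, but both can be carried out by elementary trigonometric estimates.
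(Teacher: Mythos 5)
The paper does not actually prove this lemma: it is imported verbatim from Kuruklis~\cite{SAK:94} (with the stability-oval description coming from Levitskaya~\cite{ISL:05}) and is used as a black box in the proof of Theorem~\ref{lem::beta-dis_scalar}. So there is no in-paper argument to compare against line by line. Your outline follows the same classical route that underlies those references: a D-subdivision/boundary-crossing analysis. The individual computations you report are correct --- the rescaling $w=|a|s$, the imaginary-part equation $\sin((d+1)\phi)=|a|\sin(d\phi)$, the identification of the upper critical value via $z=|a|e^{id\phi}-e^{i(d+1)\phi}$ with $|z|^2=|a|^2+1-2|a|\cos\phi$, the real crossing at $w=1$ giving $(|a|-1)/|a|^{d+1}$, the exclusion of the $w=-1$ crossing (since $\sqrt{|a|^2+1-2|a|\cos\phi}<1+|a|$ for $\phi\in[0,\pi/(d+1)]$), and the critical-point analysis $p(d/(d+1))=c-d^d/(d+1)^{d+1}$ for the second part.

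That said, as submitted this is a plan rather than a proof, and the two steps you defer are precisely the technical core of Kuruklis's result. First, the ``only if'' direction needs the strict monotonicity of $\phi\mapsto\sin((d+1)\phi)/\sin(d\phi)$ on $(0,\pi/(d+1))$ \emph{and} the single-traversal property of the critical-$c$ curve; without these, the crossing count is not controlled and the characterization could fail. Second, for the monotonicity statement about ``all the other roots,'' the product identity $\prod_i|s^i|=c$ only constrains the product of the moduli, and the Puiseux expansion only controls behavior near $c=0$; showing that \emph{each} modulus in the cluster of $d$ small roots increases over the whole interval $0<c<d^d/(d+1)^{d+1}$ (equivalently, that $\operatorname{Re}(\bar{s}/p'(s))<0$ along every such branch) is a genuine argument, and it is also what guarantees that the larger positive real branch remains the maximum-modulus root all the way to the coalescence point, so that $|s^1|=d/(d+1)$ there. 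If you intend to prove the lemma rather than cite it, these two items must be carried out explicitly; otherwise the honest course is to do what the paper does and quote \cite{SAK:94}.
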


\noindent Table.~\ref{table.1} shows different values of $\frac{d^d}{(d+1)^{d+1}}$ for a given $d\in\{1,\cdots,5\}$.

\begin{table}[]
    \centering
   \begin{tabular}{ |c|c|c|c|c|c| } 
 \hline
 $d$ & $1$ & $2$ & $3$ & $4$ & $5$\\ 
 \hline
 $\frac{d^d}{(d+1)^{d+1}}$ 
  & $0.250$  
  & $0.148$   
  & $0.105$   
  & $0.082$   
  & $0.067$  \\ 
 \hline
 \end{tabular}
    \caption{The  values of $\frac{d^d}{(d+1)^{d+1}}$ for a given $d$.
    }\vspace{-10pt}
    \label{table.1}
\end{table}

\begin{proof}[Proof of Theorem~\ref{lem::beta-dis_scalar}]
Notice that for $d=0$ the asymptotic convergence factor of \eqref{eq::laclacian_equivalent_z2} is equal to $\mathsf{r}_0=|1-\delta\lambda_i|$. Hence, our aim is to find the values of $d$ such that $\mathsf{r}_d<\mathsf{r}_0$, which means that the roots of the characteristic equation~\eqref{eq::char-lambda-i} lie inside the disk, $|s|<|1-\delta\lambda_i|$.
  Theorem~\ref{thm::dis_time} implies that it holds if and only if 
  \begin{subequations}
  \begin{align}
  &\frac{1}{|1-\delta\lambda_i|}<\frac{d+1}{d}\label{eq::dis_scal_proof_a}\\
  &\frac{|\frac{1}{1-\delta\lambda_i}|\!-\!1}{|\frac{1}{1-\delta\lambda_i}|^{d+1}}<\!\delta\lambda_i\!<\!\frac{\sqrt{\frac{1}{(1-\delta\lambda_i)^2}\!+\!1\!-\!2|\frac{1}{1-\delta\lambda_i}|\cos{\phi}}}{|\frac{1}{1-\delta\lambda_i}|^{d+1}},\label{eq::dis_scal_proof_b}
 \end{align}
  \end{subequations}
  where $\phi\in[0,\frac{\pi}{d+1}]$ is the solution of $\frac{\sin{d\phi}}{\sin{(d+1)\phi}}=\frac{1}{|1-\delta\lambda_i|}$.~By equation \eqref{eq::r0} we know $-1<1-\delta\lambda_i<1$. From~\eqref{eq::dis_scal_proof_a} we get $d<\frac{|1-\delta\lambda_i|}{1-|1-\delta\lambda_i|}$. The left-side inequality of~\eqref{eq::dis_scal_proof_b}~is satisfied for any $\delta$ with $|1-\delta\lambda_i|\!<\!1$. By some algebraic manipulation, the right-side inequality deduces $$d\!<
  \!\frac{\text{ln}(\frac{\delta\lambda_i}{\sqrt{(1-\delta\lambda_i)^2+1-2|1-\delta\lambda_i|\cos{\phi}}})}{\text{ln}(|1-\delta\lambda_i|)},$$  which concludes \eqref{eq::delay_bound_dis_rate}. The last~statement is direct application of Theorem~\ref{thm::dis_time} for~$c=\delta\lambda_i$.  
\end{proof}

\bibliographystyle{ieeetr}

\end{document}